\documentclass[reqno]{amsart}

\usepackage{multicol}
\usepackage{fullpage}
\usepackage{bbm,color}
\usepackage{graphics,array}
\usepackage{tikz}
\usepackage{ytableau}
\usetikzlibrary{matrix,arrows,snakes,patterns,cd,shapes,graphs,graphs.standard}
\usepackage{amsmath,amsthm,mathtools,hyperref}
\usepackage{amsfonts}
\usepackage{amssymb}
\definecolor{light-gray}{gray}{0.75}
\definecolor{ww}{gray}{0.9}
\definecolor{gg}{gray}{0.5}
\definecolor{bb}{gray}{0.1}

\usepackage{soul}  % for \st command for striking out

\newtheorem{theorem}{Theorem}[section]
 \newtheorem{corollary}[theorem]{Corollary}
 \newtheorem{prop}[theorem]{Proposition}

 \theoremstyle{definition}
 \newtheorem{example}[theorem]{Example}
 \newtheorem{question}[theorem]{Question}
 \newtheorem{remark}[theorem]{Remark}
 
 \newtheorem{definition}[theorem]{Definition}
 \newtheorem{defn}[theorem]{Definition}
 
\newcommand{\red}[1]{\textcolor{red}{#1}}

\definecolor{myblue}{RGB}{80,80,160}
\definecolor{mygreen}{RGB}{80,160,80}

\newcommand{\GETOUT}[1]{}
\newcommand{\redst}[1]{}

\newcommand{\thresh}{\mathrm{deg}}

\newcommand{\Motzkin}[2]{\mathsf{Motzkin}_{#1,#2}}
\newcommand{\wf}{\mathsf{f}}
\newcommand{\wg}{\mathsf{g}}

\newcommand{\splitgraphicnew}{
\begin{tikzpicture}[%-{Stealth[scale=1.25]},
%my node style/.style={circle,fill,inner sep=1.5pt}
my node style/.style={circle,draw,inner sep=1pt}
]
  \node[my node style]  (v0) at (0,1) {$v_1$};
  \node[my node style]  (v1) at (.951,.309)  {$v_2$};
  \node[my node style]  (v4) at (-.951,.309)  {$v_5$};
  \node[my node style]  (v2) at (.5878,-.809)  {$v_3$};
  \node[my node style]  (v3) at (-.5878,-.809)  {$v_4$};
  \node[my node style]  (v5) at (4,1.5) {$w_1$};
  \node[my node style]  (v6) at (4,0.5) {$w_2$};
  \node[my node style]  (v7) at (4,-0.5) {$w_3$};
  \node[my node style]  (v8) at (4,-1.5) {$w_4$};
  \foreach \i/\j in {0/1,0/2,0/3,0/4,1/2,1/3,1/4,2/3,2/4,3/4,0/7,1/8,2/8,4/8}
    \path (v\i) edge (v\j);
  \path (v0) edge [bend right=0] (v5);
  \path (v0) edge [bend right=0] (v6);
  \path (v0) edge [bend left=10] (v7);
  \path (v1) edge [bend right=0] (v5);
  \path (v1) edge [bend right=0] (v6);
  \path (v1) edge [bend left=0] (v7);
  \path (v2) edge [bend right=0] (v5);
  \path (v2) edge [bend right=0] (v6);
  \path (v2) edge [bend left=0] (v7);
  \path (v3) edge [bend right=0] (v5);
  \path (v3) edge [bend left=5] (v6);
  \path (v3) edge [bend right=20] (v7);
  \path (v4) edge [bend left=2] (v5);
  \path (v4) edge [bend left=9] (v6);
  \path (v4) edge [bend right=0] (v7);
  \path (v3) edge [bend right=10] (v8);
  \path (v0) edge [bend left=15] (v8);
\end{tikzpicture}
}

\newcommand{\motzkinpath}{
\begin{tikzpicture}[scale=0.5],
%my node style/.style={circle,fill,inner sep=1.5pt}
my node style/.style={circle,draw,inner sep=1pt}
]
  %\node[my node style]  (v0) at (0,1) {$v_1$};
  \draw (0,0) -- ++(1,0) -- ++(1,1) -- ++(1,0) -- ++(1,0) -- ++(1,1) -- ++(1,-1) -- ++(1,0) -- ++(1,1) -- ++(1,-1) -- ++(1,-1) ;
  \filldraw [black] (0,0) circle (2pt)
					(1,0) circle (2pt)
					(2,1) circle (2pt) 
					(3,1) circle (2pt) 
					(4,1) circle (2pt) 
					(5,2) circle (2pt) 
					(6,1) circle (2pt) 
					(7,1) circle (2pt) 
					(8,2) circle (2pt) 
					(9,1) circle (2pt) 
					(10,0) circle (2pt);
\end{tikzpicture}
}

\newcommand{\spanningtree}{
\begin{tikzpicture}[%-{Stealth[scale=1.25]},
%my node style/.style={circle,fill,inner sep=1.5pt}
my node style/.style={circle,draw,inner sep=1pt}
]
  \node[my node style]  (v1) at (0,1) {$v_1$};
  \node[my node style]  (v2) at (0,0)  {$v_2$};
  \node[my node style]  (v3) at (2,0.5)  {$v_3$};
  \node[my node style]  (v4) at (1,-1)  {$v_4$};
  \node[my node style]  (v5) at (1,1)  {$v_5$};
  \node[my node style]  (v6) at (3,0.5)  {$v_6$};
  \node[my node style]  (v7) at (1,0)  {$v_7$};
  \node[my node style]  (v8) at (2,2)  {$v_8$};
  \node[my node style]  (v9) at (2,-2)  {$v_9$};
  \foreach \i/\j in {1/5,5/8,5/3,3/6,3/7,7/2,7/4,4/9}
    \path (v\i) edge (v\j);
\end{tikzpicture}
}

\def\Split{S}
\def\Sandpile{\mathsf{ASM}}
\def\Rec{\mathsf{Rec}}

\newcommand{\car}[1]{\fbox{#1}}

\title{The sandpile model on the complete split graph,\\ Motzkin words, and tiered parking functions}
\author{Mark Dukes}
\address{School of Mathematics and Statistics, University College Dublin, Ireland.}
\email{mark.dukes@ucd.ie}
\date{\today}
\begin{document}
\maketitle
\begin{abstract}
We classify recurrent states of the Abelian sandpile model (ASM) on the complete split graph.
There are two distinct cases to be considered that depend upon the location of the sink vertex in the complete split graph.
This characterisation of decreasing recurrent states is in terms of Motzkin words and can also be characterised in terms of combinatorial necklaces.
We also give a characterisation of the recurrent states in terms of a new type of parking function that we call a {\it{tiered parking function}}.
These parking functions are characterised by assigning a tier (or colour) to each of the cars, and specifying how many cars of a lower-tier one wishes to have parked before them.
We also enumerate the different sets of recurrent configurations studied in this paper, and in doing so derive a formula for the number of spanning trees of the complete split graph that uses a bijective Pr\"ufer code argument.
\end{abstract}

\section{Introduction}

The complete split graph is a bipartite graph consisting of two distinct parts, a clique part in which all distinct pairs of vertices are connected by a single edge, and an independent part in which no two vertices are connected to an edge. There is precisely one edge between every vertex in the clique part and every vertex in the independent part. 
We denote the complete split graph which has vertices $\{v_1,\ldots,v_m\}$ in the clique part and vertices $\{w_1,\ldots,w_n\}$ in the independent part by $\Split_{m,n}$. The graph $S_{5,4}$ is illustrated in Figure~\ref{figsplit}.

\begin{figure}[!h]
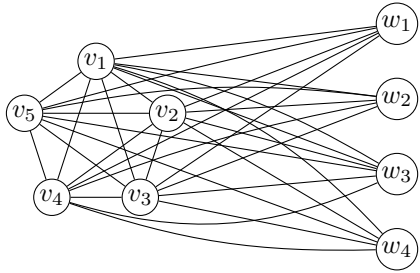

\label{figsplit}
	\begin{center}
	\splitgraphicnew
	\end{center}
\caption{The complete split graph $S_{5,4}$
}
\end{figure}

The graph $S_{m,n}$ contains the complete graph $K_m$ as a subgraph, but is also a bipartite graph in its own right, and it is this dual feature that we find interesting to examine in terms of the sandpile model.
The sandpile model has been studied on several classes of graphs, and rich connections to other combinatorial structures have been established in each of the cases. 
For the case of the sandpile model on the complete graph $K_{n+1}$ in which one designated vertex is a sink, it was shown by Cori and Rossin~\cite{corikn} that the set of recurrent states of the sandpile model on this graph are in one-to-one correspondence with parking functions of order $n$.

In the case of the sandpile model on the complete bipartite graph $K_{m,n}$ in which a designated vertex is the sink, it was shown by the author (in collaboration with Le Borgne~\cite{dlb}) that the set of recurrent states admit a description in terms of planar animals called {\it{parallelogram polyominoes}}.
Parallelogram polyominoes are also known as {\it{staircase polyominoes}} in the literature.
Cori and Poulalhon in \cite{coripoul} showed that the recurrent states of the sandpile model on the complete tri-partite graph $K_{1,p,q}$, in which the solitary vertex is the sink, admits a description in terms of a parking function for cars of two different colours.

In this paper we will classify the recurrent states of the Abelian sandpile model on the complete split graph.
There are two distinct cases to be considered that depend on whether the sink is a clique vertex or an independent vertex in the graph.
The classification is in terms of two different sets of Motzkin words, 
and we explain how these words can also be considered as unique representations of certain 3-coloured combinatorial necklaces.

Further to this, we give a second characterisation of the recurrent states in terms of a new type of parking function that we call a {\it{tiered parking function}}.
These parking functions are characterised by assigning a tier (or colour) to each of the cars, and specifying how many cars of a lower-tier one wishes to have parked before them in a one-way street.
We also enumerate the different sets of recurrent configurations studied in this paper, and in doing so derive a formula for the number of spanning trees of the complete split graph.
All the proofs given in this paper are bijective ones.

This paper lays the foundations for a study into statistics on these recurrent configurations, as was done in the case of the 
complete bipartite graph in \cite{dlb} and \cite{aadhl,aadl}.
We posit that there are many interesting correspondences to be uncovered by studying  
Dhar's burning algorithm (that characterises recurrent configurations)
in the context of bijective combinatorics, as evidenced by \cite{dukesselig,ferrers,permgraphs,sss}

%%%%%%%%%%%%%%%%%%%%%%%%%%%%%%%%%%%%%%%%%%%%%%%%%%%%%%%%%%%%%%%%%%%%%%%%%%%%
%%%%%%%%%%%% SECTION %%%%%%%%%%%%%%%%%%%%%%%%%%%%%%%%%%%%%%%%%%%%%%%%%%%%%%%
%%%%%%%%%%%%%%%%%%%%%%%%%%%%%%%%%%%%%%%%%%%%%%%%%%%%%%%%%%%%%%%%%%%%%%%%%%%%
\section{Recurrent states of the sandpile model on a graph}

The Abelian sandpile model (ASM) may be defined on any undirected graph $G$ with a designated vertex~$s$ called the \emph{sink}.  
A \emph{configuration} on $G$ is an assignment of non-negative integers to the non-sink vertices of $G$,
$$c:\mathrm{Vertices}(G)\backslash\{s\} ~\mapsto ~ \mathbb{N}=\{0,1,2,\ldots\}.$$
The number $c(v)$ is sometimes referred to as the number of \emph{grains} at vertex~$v$, or as the \emph{height} of $v$.  
Given a configuration $c$, a vertex $v$ is said to be {\emph{stable}} if the number of grains at $v$ is strictly smaller than the threshold of that vertex, which is the degree of $v$, denoted $\thresh(v)$.
Otherwise $v$ is {\emph{unstable}}.  A configuration is called {\emph{stable}} if all non-sink vertices are stable.

If a vertex is unstable then it may {\emph{topple}}, which means the vertex donates one grain to each of its neighbors.  
The sink vertex has no height associated with it and it only absorbs grains, thereby modelling grains exiting the system.
Given this, it is possible to show 
that starting from any configuration $c$ and toppling unstable vertices, one eventually reaches a stable configuration $c'$.  
Moreover, $c'$ does not depend on the order in which vertices are toppled in this sequence. We call $c'$ the \emph{stabilisation} of $c$.
We use the notation $\Sandpile(G,s)$ to indicate that we are considering the Abelian sandpile model as described here on the graph $G$ with sink $s$.

Starting from the empty configuration, one may indefinitely add any number of grains to any vertices in $G$ and topple vertices should they become unstable.  
Certain stable configurations will appear again and again, that is, they \emph{recur}, while other stable configurations will never appear again.
These {\emph{recurrent configurations}} are the ones that appear in the long term limit of the system. 
Determining the set of recurrent configurations for $\Sandpile(G,s)$ is not a straightforward task. In \cite[Section 6]{Dhar}, Dhar describes the so-called \emph{burning algorithm}, which establishes in linear time whether a given stable configuration is recurrent.
We recall the result here.

\begin{prop}[\cite{Dhar}, Section 6.1]\label{pro:DharBurning}
Let $G$ be a graph with sink $s$.
A stable configuration $c$ on $G$ is recurrent if and only if there exists an ordering $v_0=s,v_1,\ldots,v_{n}$ of the vertices of $G$ such that, starting from $c$, for any $i \geq 1$, toppling the vertices $v_0,\ldots,v_{i-1}$ causes the vertex $v_i$ to become unstable. 
\end{prop}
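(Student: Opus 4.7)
My approach is to identify the burning ordering $v_0 = s, v_1, \ldots, v_n$ with a legal toppling sequence on the configuration $c + b$, where $b : \mathrm{Vertices}(G)\setminus\{s\} \to \mathbb{Z}_{\geq 0}$ is the sink-boundary vector $b(v) := \#\{\text{edges joining } v \text{ to } s\}$. The arithmetic backbone is the identity $\Delta' \mathbf{1} = b$, where $\Delta'$ denotes the reduced Laplacian restricted to non-sink vertices; equivalently, firing every non-sink vertex exactly once changes any configuration by $-b$. Under this identification, a burning ordering for $c$ corresponds exactly to a legal stabilisation of $c + b$ in which each non-sink vertex fires exactly once and whose endpoint is $c$.

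For the $(\Leftarrow)$ direction I would verify by induction on $i$ that the sequence $v_1, \ldots, v_n$ is a legal stabilisation of $c + b$: at step $i$ the height at $v_i$ equals
\[
c(v_i) + \#\{j \in \{0,1,\ldots,i-1\} : v_j \sim v_i\},
\]
where the $j = 0$ term accounts for the ``virtual'' toppling of $s$ that produced $b$. The burning hypothesis guarantees this is at least $\thresh(v_i)$, so $v_i$ is indeed unstable when it is toppled. After all $n$ firings the net change from $c + b$ is $-b$, so we land back at $c$; this exhibits $b \neq 0$ as a non-zero addition that stabilises to $c$, witnessing recurrence.

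For the $(\Rightarrow)$ direction I would aim to show that $c + b$ stabilises to $c$ for every recurrent $c$. Given this, any odometer $\kappa$ for that stabilisation satisfies $\Delta' \kappa = b = \Delta' \mathbf{1}$, and invertibility of $\Delta'$ over $\mathbb{Q}$ forces $\kappa \equiv 1$. Hence every non-sink vertex fires exactly once, and recording these firings in any legal order produces the required $v_1, \ldots, v_n$. To prove that $c + b$ stabilises to $c$, I would first observe that the maximal stable configuration $c_{\max}(v) := \thresh(v) - 1$ trivially admits a burning ordering (any ordering of the non-sink vertices works), so the already-proved $(\Leftarrow)$ direction gives that $c_{\max} + b$ stabilises to $c_{\max}$. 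Invoking the standard fact that every recurrent $c$ equals $\mathrm{stab}(c_{\max} + a)$ for some $a \geq 0$, the Abelian property then yields
\begin{align*}
\mathrm{stab}(c + b)
&= \mathrm{stab}(\mathrm{stab}(c_{\max} + a) + b)
= \mathrm{stab}(c_{\max} + a + b) \\
&= \mathrm{stab}(\mathrm{stab}(c_{\max} + b) + a)
= \mathrm{stab}(c_{\max} + a) = c.
\end{align*}

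The main obstacle I foresee is justifying the characterisation that every recurrent $c$ equals $\mathrm{stab}(c_{\max} + a)$ for some $a \geq 0$. This is standard, but it rests on the Abelian property together with a minimal-odometer (least-action) lemma for stabilisation, both of which require careful setup from first principles. Once this characterisation is in hand, the remainder of the argument reduces to bookkeeping centred on the single identity $\Delta' \mathbf{1} = b$.
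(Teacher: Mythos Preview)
The paper does not actually prove this proposition: it is simply quoted from Dhar~\cite{Dhar} and used as a black box thereafter, so there is no ``paper's own proof'' to compare against. Your sketch is the standard argument built around the identity $\Delta'\mathbf{1}=b$, and the overall architecture is sound.

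One soft spot worth flagging: in your $(\Leftarrow)$ direction you conclude by saying that exhibiting a non-zero $b$ with $\mathrm{stab}(c+b)=c$ ``witnesses recurrence''. Under the paper's informal definition (``configurations that appear in the long-term limit''), or under the usual Markov-chain definition, this is not immediate: the existence of \emph{some} non-zero addition returning to $c$ does not by itself show that $c$ is reachable from \emph{every} stable configuration. The clean fix is to use the same characterisation you already invoke for $(\Rightarrow)$: once you know $\mathrm{stab}(c+b)=c$ with each vertex firing once, observe that $c_{\max}+b$ has every vertex unstable only at those adjacent to $s$, but more to the point $c\le c_{\max}$, so $c_{\max}=c+(c_{\max}-c)$ and hence $\mathrm{stab}(c_{\max}+b+(c-c_{\max}+?)\,)$\,\ldots\ --- in short, you still need to connect back to $c_{\max}$. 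The tidiest route is simply to \emph{prove} the equivalence ``$c$ recurrent $\iff$ $\mathrm{stab}(c+b)=c$'' once (via the $c_{\max}$ characterisation and the Abelian property, exactly as you do for $(\Rightarrow)$), and then both directions of the proposition reduce to the bookkeeping you already have. You correctly identify that the $c_{\max}$ characterisation is the real content here; just be aware it is needed for \emph{both} directions, not only $(\Rightarrow)$.
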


This algorithm has seen several equivalent formulations over the years, and we refer the interested reader to two recent books on the topic that discuss these in a clear and insightful way; Corry \& Perkinson~\cite[\S7.5]{perkinson} and Klivans~\cite[\S2.6.7]{klivans}.

A configuration on $\Sandpile(\Split_{m,n},s)$ is a vector $c=(c_1,\ldots,c_{i-1},-,c_{i+1},\ldots,c_{m+n-1})$ 
whereby $c_i$ represents the number of chips/grains at the $i$th vertex in the 
sequence $(v_1,\ldots,v_m,w_1,\ldots,w_n)$. The dash indicates the location corresponding to the sink.
We will use a semi-colon in the configurations to distinguish between the clique and independent parts, e.g. 
$c=(a_1,\ldots,a_{m-1},-;b_1,\ldots,b_n)$.
Let $\Rec(\Sandpile(\Split_{m,n},s))$ be the set of recurrent states of the model.
We will restrict our analysis to those configurations that are weakly decreasing with respect to vertex labels. 
This restriction will allow us to focus our analysis to consider characterizing all those `different' configurations, and from which we can generate all configurations through permutations.
The two sets we will consider are the weakly decreasing configurations of the sets $\Sandpile(\Split_{m,n},v_m)$ and $\Sandpile(\Split_{m,n},w_n)$.

\section{Motzkin words and recurrent states of the split graph}
\newcommand{\mw}{\alpha}
\newcommand{\mtc}{f}
\subsection{Recurrent states for the case of a clique-sink}
We will use Motzkin words for the characterisation of recurrent states for both cases.
A {\em{Motzkin path $P$ of length $p$ }} is a lattice path in the plane from $(0,0)$ to $(p,0)$ which never passes below the
$x$-axis and whose permitted steps are the up step $U=(1,1)$, the down step $D=(1,-1)$, and the horizontal step $H=(1,0)$.
An example of a Motzkin path is given in Figure~\ref{motzexamp}. 
The Motzkin word of a path is a listing of the $p$ steps of the path in the order they appear from left to right,
e.g.  the Motzkin word of Figure~\ref{motzexamp} is $\mw=HUHHUDHUDD$.

\begin{figure}
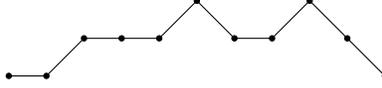

\begin{center}
\motzkinpath
\end{center}
\caption{Example of a Motzkin path of length 10\label{motzexamp}.}
\end{figure}

\begin{defn} \label{motzkindefn}
A {\em Motzkin word} is a word $\mw$ consisting of the letters $U$, $D$, $H$ with the properties (i) in counting $\mw$ from left to right
the $U$ count is always greater than or equal to the $D$ count, and (ii) the total $U$ count is equal to the $D$ count.
Let $\Motzkin{m}{n}$ be the set of Motzkin words with $m$ occurrences of $U$  and $n$ occurrences of $H$.
\end{defn}

The deletion of the $H$ letters in a Motzkin word gives a Dyck word and, conversely, a Motzkin word with $m$ occurrences of $U$ and 
$n$ occurrences of $H$ is obtained by taking a Dyck word with $m$ occurrences of $U$ and inserting $n$ occurrences of $H$ in 
any positions in this word. This decomposition provides an enumeration of such words:
$$|\Motzkin{m}{n}| = \dfrac{1}{m+1} {2m \choose m} {2m+n \choose n}.$$
\begin{defn}
To any Motzkin word $\mw \in \Motzkin{m-1}{n}$ we associate a configuration 
$$\mtc(\mw)= (a_1,\ldots,a_{m-1},-; b_1,b_2,\ldots,b_n)$$ on $\Split_{m,n}$ as follows:
\begin{itemize}
\item $a_i$ is such that $a_i+1$ is equal to the number of occurrences of the letter $D$ plus the number of occurrences of the letter $H$ appearing in $\mw$ after the $i$-th occurrence of the letter $U$.
\item $b_i$ is equal to the number of occurrences of the letter $D$ appearing after the $i$-th occurrence of the letter $H$ in $\mw$.
\end{itemize}
\end{defn}
Observe that, by construction, the sequences $(a_1,\ldots,a_{m-1})$ and $(b_1,\ldots,b_n)$ are weakly decreasing.
For the Motzkin word example above, the associated configuration on $\Sandpile(\Split_{4,4},v_4)$ is 
$$\mtc(\mw)=(5,3,1,-;3,3,3,2).$$

\begin{theorem}\label{thmone}
A weakly decreasing stable configuration $c=(a_1,\ldots,a_{m-1},-;b_1,\ldots,b_n)$ on $\Sandpile(\Split_{m,n},v_m)$ is recurrent 
iff it corresponds, via the construction $f$, to a unique Motzkin word $\mw$ in $\Motzkin{m-1}{n}$.
\end{theorem}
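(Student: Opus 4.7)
The plan is to use Dhar's burning algorithm (Proposition~\ref{pro:DharBurning}) together with an explicit inverse to $\mtc$, establishing a bijection between $\Motzkin{m-1}{n}$ and the decreasing recurrent configurations of $\Sandpile(\Split_{m,n},v_m)$. First I would verify that $\mtc(\mw)$ is always weakly decreasing and stable. The bounds $a_i \leq m+n-2$ and $b_j \leq m-1$ follow from the fact that $\mw$ contains $m-1$ occurrences of $D$ and $n$ occurrences of $H$ in total, which match the thresholds (degrees) of a non-sink clique vertex and an independent vertex respectively. Weak monotonicity is immediate since each later occurrence of $U$ (resp.\ $H$) has weakly fewer $D$'s and $H$'s after it.

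For the recurrence direction, I would produce an explicit burning order: burn the sink $v_m$ first, then read $\mw$ left to right, interpreting the $j$-th $H$ as burning $w_j$, the $i$-th $D$ as burning $v_i$, and each $U$ as a passive marker that burns no vertex. Writing $d^U_i, h^U_i$ for the $D$- and $H$-counts before the $i$-th $U$, $d^H_j$ for the $D$-count before the $j$-th $H$, and $h^D_i$ for the $H$-count before the $i$-th $D$, the burning condition at the $j$-th $H$ reduces via the definition of $b_j$ to the equality $b_j + 1 + d^H_j = m$, which is automatic. The condition at the $i$-th $D$ is $a_i + i + h^D_i \geq m+n-1$; using $a_i = m+n-2-d^U_i-h^U_i$, this becomes $(i - d^U_i) + (h^D_i - h^U_i) \geq 1$. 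The first summand is $\geq 1$ since $d^U_i \leq i-1$ by Motzkin non-negativity just before the $i$-th $U$, and the second is $\geq 0$ because the $i$-th $U$ must precede the $i$-th $D$ (any prefix of a Motzkin word has at least as many $U$'s as $D$'s, so at the $i$-th $D$ at least $i$ $U$'s have already appeared). Hence every stage meets Dhar's threshold and $\mtc(\mw)$ is recurrent.

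For the converse I would define the inverse $g$ as follows: given a weakly decreasing recurrent $c$, first arrange $m-1$ $D$'s and $n$ $H$'s into a sub-word so that the $j$-th $H$ is preceded by exactly $m-1-b_j$ $D$'s (well-defined by monotonicity of $(b_j)$), then insert the $i$-th $U$ immediately after the $(m+n-2-a_i)$-th position of this sub-word (unambiguous by monotonicity of $(a_i)$). The resulting word has equal $U$- and $D$-counts, hence ends at height $0$. Non-negativity of the prefix heights reduces (as only $D$'s lower the height) to the condition $u^D_i \geq i$ at every $D$, where $u^D_i$ is the number of $U$'s inserted before the $i$-th $D$. By construction $u^D_i = \#\{i' : a_{i'} \geq m+n-1-i-h^D_i\}$, so by monotonicity of $(a_{i'})$ this is equivalent to $a_i \geq m+n-1-i-h^D_i$ --- precisely the burning inequality Dhar's algorithm imposes on $v_i$. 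Hence $g(c)$ is a Motzkin word if and only if $c$ is recurrent, and $\mtc$ and $g$ are mutually inverse by inspection of the defining formulas.

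The main obstacle I anticipate is the careful bookkeeping of the interleaved counts $d^U_i, h^U_i, d^H_j, h^D_i, u^D_i$, and the clean translation between Dhar's inequalities and the Motzkin non-negativity; once this dictionary is in hand, the bijection drops out from the defining formulas.
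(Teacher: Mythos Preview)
Your proposal is correct and is a legitimate variant of the paper's argument. The key difference is the burning order you extract from the Motzkin word: the paper reads off the non-$D$ letters, toppling $v_i$ at the $i$-th~$U$ and $w_j$ at the $j$-th~$H$, whereas you read off the non-$U$ letters, toppling $v_i$ at the $i$-th~$D$ and $w_j$ at the $j$-th~$H$. Both choices work. Yours has the pleasant feature that the $w_j$-step is automatically tight (the inequality $b_j+1+d^H_j\ge m$ is an equality), and it makes the converse more transparent, since the Motzkin prefix condition at the $i$-th~$D$ coincides \emph{exactly} with the Dhar inequality for $v_i$ in that particular order. The paper's converse, by contrast, simply asserts that the constructed word satisfies the prefix condition; your bookkeeping with $u^D_i$ actually supplies the missing verification.

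One small point to tighten: in the converse you say the inequality $a_i\ge m+n-1-i-h^D_i$ is ``precisely the burning inequality Dhar's algorithm imposes on $v_i$'', but Dhar's criterion only guarantees that \emph{some} burning order exists, not that this particular one succeeds. You need one extra sentence: by confluence of the burning process (toppling a vertex can only increase other heights), any maximal burning sequence on a recurrent configuration burns everything; and since the $a_i$ are weakly decreasing and toppling a $w$ never helps another $w$, the greedy order that fires each $w_j$ as soon as it becomes unstable and otherwise fires the unburned $v$ of smallest index is exactly the order your word $g(c)$ encodes. With that remark the bijection is complete.
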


\begin{proof}
Suppose that $\mw$ is a Motzkin word in $\Motzkin{m-1}{n}$.
The positions of the occurrences of the letters $U$ and $H$ in $\mw$ give the toppling order required in Dhar's toppling order (Proposition~\ref{pro:DharBurning}).
The $i$-th occurrence of $U$ corresponds to the toppling of vertex $v_i$ and the $j$-th occurrence of $H$ corresponds to the toppling of vertex $w_j$. 
In the example above with $\mw=HUHHUDHUDD$, the toppling order is 
$$w_1,v_1,w_2,w_3,v_2,w_4,v_3.$$
We make the following observations using the definition for $\mtc(\alpha)$:
\begin{itemize}
\item 
	As $v_i$ corresponds to the $i$-th occurrence of $U$ in $\mw$, and the height $a_i$ associated with it is such that $a_i+1$ is the number of $D$'s and $U$'s to its right, we must have $a_i+1 \leq m-1+n$, i.e. $v_i$ is stable. 
	Similarly, as $w_i$ corresponds to the $i$-th occurrence of $H$ in $\mw$, and the height $b_i$ associated with it is such that $b_i$ is the number of $D$'s to its right, we must have $b_i \leq m-1$, the maximum number of such $D$'s. This shows that $w_i$ must be stable.
\item 
	Suppose that, as in Dhar's toppling algorithm, we have toppled the sink and the $i-1$ vertices that correspond to the first non-$D$ symbols of $\mw$.
	Toppling the sink will contribute one extra grain to every vertex. The $i$-th non-$D$ symbols of $\mw$ can be $H$ or $U$.

	If the $i$-th non-$D$ symbol of $\mw$ is $H$ then this $H$ corresponds to the vertex $w_k$ for some $k$. 
	To the left of this $H$ in $\mw$ there are $k-1$ $H$'s and $i-k$ $U$'s. 
	As each of the these $i-k$ $U$'s have already toppled, they will have added $i-k$ grains to the initial height of $w_k$, and topplings of the vertices corresponding to $H$'s will have no effect on this height.
	This means the current height of $w_k$ is $b_k+1+(i-k)$. 
	The value $b_k+1+(i-k)$ equals the number of occurrences of $D$ in $\mw$ after this $i$-th $H$ we are considering, plus one, 
  	plus the number of occurrences of $U$ to its left.
	As the number of $U$'s to its left is always at least as large as the number of $D$'s to its left, 
	it must be the case that the current height of $w_k$ is at least the total number of $D$'s plus one, 
	which equals $m$, and $w_k$ is therefore unstable. 

	If, instead, the $i$-th non-$D$ symbol of $\mw$ is $U$ then this current $U$ corresponds to the vertex $v_k$ for some $k$. 
	To the left of this $U$ in $\mw$ there are $i-1$ non-$D$ symbols (each either $U$ or $H$) representing vertices that toppled and each added 1 grain to the height of vertex $v_k$.
	This means the current height, $h$ say, of $v_k$ is $a_k$ (the initial height) plus 1 (from the toppling of the sink) + $(i-1)$ from the topplings just mentioned, i.e. $h=a_k+1+(i-1)$. 
	By definition, $a_k$ such that $a_k+1$ $=$ the number of $D$'s and $H$'s appearing after the `current $U$'.
	 Also, the number of $U$'s and the number of $H$'s to the left of this 'current $U$' is at least as large as the number of $D$'s and the number of $H$'s to the left of this 'current $U$'.
	Therefore we have $h=a_k+1+(i-1) \geq 1+\mbox{number of $D$'s and $H$'s in $\mw$} = 1+m+n-1 = m+n$.
	This implies that vertex $v_k$ is unstable.
\end{itemize}
Therefore, by Dhar's burning algorithm, $\mtc(\alpha)$ will be a recurrent configuration that is, by construction, weakly decreasing.

Next, suppose we have a recurrent configuration $c=(a_1,\ldots,a_{m-1},-;b_1,\ldots,b_n)$ that is weakly decreasing.
Construct a word $\beta$ as follows. Let $\beta$ initially consist of only $n$ $H$'s.
\begin{itemize}
\item Insert $m-1$ $D$'s into $\beta$ in such a way that there are $b_i$ occurrences of the symbol $D$ appearing after the $i$-th occurrence of the symbol $H$. 
\item Next insert $m-1$ $U$'s so that there are $a_i+1$ occurrences of $D$ or $H$ appearing after the $i$-th occurrence of $U$ in $\beta$.
\end{itemize}
This construction will always have the property that the number of $U$'s seen in any prefix is at least as large as the number of $D$'s in that prefix, i.e., 
the reduced word (without the $H$'s) is a Dyck word, and so (by Definition~\ref{motzkindefn}) the word $\beta$ is a Motzkin word consisting of $m-1$ $U$'s and $n$ $H$'s.
\end{proof}

As the number of such decreasing recurrent configurations is equal to the number $|\Motzkin{m-1}{n}|$, we have the following:

\begin{corollary}
The number of weakly decreasing recurrent configurations on $\Split_{m,n}$ with sink $v_m$ is equal to
$$\dfrac{1}{m} {2m-2 \choose m-1} {2m-2+n \choose n}.$$
\end{corollary}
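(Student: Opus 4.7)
The plan is to use Theorem~\ref{thmone} to reduce the enumeration problem to counting Motzkin words, and then invoke the formula for $|\Motzkin{m}{n}|$ that was stated (via the Dyck-word decomposition) just before Definition~\ref{motzkindefn}. Concretely, Theorem~\ref{thmone} says the map $\mtc$ is a bijection between the weakly decreasing recurrent configurations on $\Sandpile(\Split_{m,n},v_m)$ and the set $\Motzkin{m-1}{n}$ of Motzkin words with $m-1$ occurrences of $U$ and $n$ occurrences of $H$. Therefore the desired count is simply $|\Motzkin{m-1}{n}|$.

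To finish, I would substitute $m-1$ in place of $m$ in the stated formula
$$|\Motzkin{m}{n}| = \dfrac{1}{m+1} \binom{2m}{m}\binom{2m+n}{n},$$
which immediately yields
$$|\Motzkin{m-1}{n}| = \dfrac{1}{m} \binom{2m-2}{m-1}\binom{2m-2+n}{n},$$
matching the claimed expression. Since both ingredients (the bijection and the Motzkin-word count) are already established in the paper, there is no genuine obstacle here; the corollary is a direct numerical consequence and the write-up reduces to citing Theorem~\ref{thmone} and performing the index shift.
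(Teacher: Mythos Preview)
Your proposal is correct and matches the paper's own reasoning: the paper simply notes that by Theorem~\ref{thmone} the count equals $|\Motzkin{m-1}{n}|$ and reads off the formula. One tiny slip: the enumeration formula for $|\Motzkin{m}{n}|$ appears just \emph{after} Definition~\ref{motzkindefn}, not before it.
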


\subsection{Recurrent states for the case of an independent-sink}
We have a similar characterisation as in the previous case, however we have to restrict the set of Motzkin words to those for which the first occurrence of $H$ appears after the first occurrence of $D$. We call these words {\it{$DH$-Motzkin words}}.

\begin{defn}\label{gdef}
To any $DH$-Motzkin word $\mw$ having $m$ occurrences of the letter $U$ and $n-1$ occurrences of the letter $H$ we associate 
a configuration $g(\mw) = (a_1,\ldots,a_m;b_1,\ldots,b_{n-1},-)$ on $\Sandpile(\Split_{m,n},w_n)$ as follows:
\begin{itemize}
\item $a_i$ is such that $a_i+1$ is equal to the number of occurrences of the letters $D$ and $H$ in $\mw$ that appear after the 
$i$-th occurrence of the letter $U$ in $\mw$.
\item $b_i$ is equal to the number of occurrences of the letter $D$ appearing after the $i$-th occurrence of the letter $H$ in $\mw$.
\end{itemize}
\end{defn}

For example, to the $DH$-Motzkin word $$\mw = UUDHUDHUDDH$$
is associated the configuration $g(\mw)=(6,6,4,2;3,2,0,-)$ on $\Sandpile(\Split_{4,4},w_4)$.

\begin{theorem}\label{thmtwo}
A weakly decreasing stable configuration $c=(a_1,\ldots,a_{m};b_1,\ldots,b_{n-1},-)$ on $\Sandpile(\Split_{m,n},w_n)$ is recurrent 
iff it corresponds, via the construction $g$, to a unique $DH$-Motzkin word $\mw$ in $\Motzkin{m}{n-1}$.
\end{theorem}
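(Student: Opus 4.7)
The plan is to adapt the proof of Theorem~\ref{thmone} to the new sink. The essential difference is that the sink $w_n$ is now an independent vertex of degree $m$, adjacent only to the clique vertices $v_1,\ldots,v_m$: toppling the sink increments every clique vertex by one grain but leaves the remaining independent vertices $w_1,\ldots,w_{n-1}$ untouched. This asymmetry is exactly what forces the $DH$ restriction and alters the bookkeeping in Dhar's verification.

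For the forward direction, given $\mw \in \Motzkin{m}{n-1}$ satisfying the $DH$ property, I would first check that $g(\mw)$ is stable. Counting the $D$'s and $H$'s after the $i$-th $U$ gives $a_i \leq m + n - 2 = \thresh(v_i)-1$. For independent vertices, $b_1$ equals the number of $D$'s after the first $H$; the $DH$ condition forces at least one of the $m$ total $D$'s to precede the first $H$, so $b_1 \leq m - 1$, and weak decrease of $(b_i)$ propagates this bound. For Dhar's ordering I would take $w_n, x_1,\ldots, x_{m+n-1}$, where $x_j$ is the vertex associated with the $j$-th non-$D$ letter of $\mw$ (with the $U$'s encoding $v_1, v_2, \ldots$ and the $H$'s encoding $w_1, w_2, \ldots$ in order). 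Suppose this $j$-th letter is the $k$-th letter of its own kind. In the $U$-case ($x_j = v_k$), the current height of $v_k$ is $a_k + j$, since $v_k$ receives a grain from the sink and from each of the $j - 1$ previously toppled non-sink vertices. In the $H$-case ($x_j = w_k$), the current height is $b_k + (j-k)$, since only the $j - k$ previously toppled clique vertices contribute. In both cases the Motzkin prefix inequality (number of $U$'s $\geq$ number of $D$'s) at the position of $x_j$ converts directly into $a_k + j \geq m + n - 1$ and $b_k + (j-k) \geq m$, respectively, so $x_j$ is unstable when it is meant to topple.

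For the reverse direction, given a weakly decreasing recurrent $c = (a_1,\ldots,a_m;b_1,\ldots,b_{n-1},-)$, I would construct $\beta$ by the same insertion recipe as in Theorem~\ref{thmone}: start with $n-1$ copies of $H$, insert $m$ copies of $D$ so that exactly $b_i$ of them lie to the right of the $i$-th $H$, then insert $m$ copies of $U$ so that exactly $a_i + 1$ letters from $\{D,H\}$ lie to the right of the $i$-th $U$. Stability of $c$ already delivers $b_1 \leq m - 1$, so some $D$ precedes the first $H$, and $\beta$ satisfies the $DH$ condition. The main obstacle, as in Theorem~\ref{thmone}, is confirming that the $U/D$ subword of $\beta$ is a Dyck word, and this is where recurrence of $c$ is indispensable: a violation of the prefix inequality somewhere would mean that Dhar's algorithm stalls when it reaches the vertex encoded at that position, contradicting the recurrence of $c$. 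Uniqueness of $\beta$ is immediate since each insertion step is prescribed by the data $(a_i)$ and $(b_i)$.
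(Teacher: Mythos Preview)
Your proposal is correct and follows essentially the same route as the paper: both proofs say that the argument of Theorem~\ref{thmone} carries over once one accounts for the fact that the independent sink $w_n$ contributes nothing to the other independent vertices, and that the $DH$ restriction is exactly the stability bound $b_1\le m-1$. The paper's own proof is only a short paragraph that points to this single difference and omits the bookkeeping, whereas you have written out the height computations for both the $U$- and $H$-cases explicitly; your detailed counts are accurate.
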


\begin{proof}
This proof largely mirrors the proof of Theorem~\ref{thmone} so we omit most of the details. 
The place in which the proofs differ is considering the first occurrence of $H$, which corresponds to the vertex $w_1$.
In checking Dhar's burning algorithm on a configuration, the sink $w_n$ topples first and increases the heights of all clique vertices $\{v_1,\ldots,v_m\}$ by 1. 
At this time all independent vertices are still stable as none have increased in height as a result of toppling the sink.
According to the construction $g$ of Definition~\ref{gdef} there must then be at least one $D$ to the left of this first $H$ to ensure stability.
\end{proof}

In order to obtain the number of $DH$-Motzkin words in $\Motzkin{m}{n-1}$, we will give a bijection with a family 
of standard Young tableaux that is inspired by a paper of Stanley~\cite{stanley_syt}.

\begin{prop}
The number of weakly decreasing recurrent configurations in $\Sandpile(\Split_{m,n},w_n)$ is
$$\dfrac{1}{m} {2m+n-1 \choose m-1} {m+n-2 \choose m-1}.$$
\end{prop}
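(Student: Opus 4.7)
The plan is to use Theorem~\ref{thmtwo} to reduce the problem to counting $DH$-Motzkin words in $\Motzkin{m}{n-1}$, and then to establish a bijection between these words and standard Young tableaux of the hook-augmented two-row shape $\lambda = (m,m,1^{n-1})$. The hook length formula applied to $\lambda$ will then deliver the product formula after a short calculation.

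To construct the bijection, given a $DH$-Motzkin word $\mw \in \Motzkin{m}{n-1}$, read its letters from left to right and record the position $k \in \{1,2,\ldots,2m+n-1\}$ of each letter. The plan is to place these positions into an SYT of shape $(m,m,1^{n-1})$ as follows: put the position of the $j$-th $U$ into box $(1,j)$, the position of the $j$-th $D$ into box $(2,j)$, and the position of the $k$-th $H$ into box $(k+2,1)$. I would then verify the three required monotonicity conditions. Rows $1$ and $2$ are automatically increasing because the $U$'s and $D$'s are recorded in their order of occurrence. The column condition for columns $j \geq 2$ (namely that the $j$-th $U$ precedes the $j$-th $D$) is exactly the Dyck condition after deleting the $H$'s, which holds by Definition~\ref{motzkindefn}. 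The first column, read top-to-bottom, lists the positions of the first $U$, first $D$, first $H$, second $H$, $\ldots$; this is increasing precisely because Motzkin words start with $U$, because of the $DH$ restriction forcing the first $D$ to precede the first $H$, and because the $H$'s are listed in order. The inverse map is immediate: given an SYT of shape $\lambda$, label position $k$ with $U$, $D$, or $H$ according to whether $k$ appears in row $1$, row $2$, or rows $3,\ldots,n+1$ of the tableau.

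Next I would apply the hook length formula to $\lambda=(m,m,1^{n-1})$. The hooks split into: the corner hook $(1,1)$ of length $m+n$; the hook $(2,1)$ of length $m+n-1$; the hooks $(1,j)$ and $(2,j)$ for $j=2,\ldots,m$ contributing $m! \cdot (m-1)!$ in total; and the hooks in column $1$ below row $2$ contributing $(n-1)!$. Hence
\begin{equation*}
|\mathrm{SYT}(\lambda)| \;=\; \frac{(2m+n-1)!}{(m+n)(m+n-1)\, m!\,(m-1)!\,(n-1)!}.
\end{equation*}
A direct manipulation, pulling out the factor $\tfrac{1}{m}$ from $m!$ and rewriting the remaining factorials as the two binomial coefficients, transforms this expression into $\tfrac{1}{m}\binom{2m+n-1}{m-1}\binom{m+n-2}{m-1}$, completing the proof.

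The main obstacle will be checking that the three monotonicity conditions of the SYT correspond exactly to the defining conditions of a $DH$-Motzkin word; in particular, one needs the $DH$ restriction at precisely one step (the inequality between boxes $(2,1)$ and $(3,1)$), which is both a sanity check and the explanation for why only $DH$-Motzkin words, rather than all of $\Motzkin{m}{n-1}$, arise. The hook length bookkeeping is routine but easy to miscount because of the boundary between the two-row rectangle and the hanging column; I would double-check it on a small case such as $m=2, n=2$ against the enumeration of $DH$-Motzkin words.
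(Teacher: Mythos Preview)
Your proposal is correct and follows essentially the same route as the paper: reduce to counting $DH$-Motzkin words via Theorem~\ref{thmtwo}, biject these with standard Young tableaux of shape $(m,m,1^{n-1})$ by recording the positions of $U$'s, $D$'s, and $H$'s in rows $1$, $2$, and the hanging column respectively, and then apply the hook length formula. Your write-up is in fact more thorough than the paper's, since you spell out exactly which tableau inequality is enforced by the $DH$ condition and verify the remaining monotonicities explicitly.
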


\begin{proof}
By Theorem~\ref{thmtwo}, this number is equal to 
the number of $DH$-Motzkin words in $\Motzkin{m}{n-1}$ is
Consider the Ferrers diagram $F$ of shape $(m,m,\stackrel{n-1}{\overbrace{1,1,\ldots,1}})$.
To a $DH$-Motzkin word $\mw$ in $\Motzkin{m}{n-1}$ we associate a standard Young tableau $T$ of shape $F$ in the following way.
Let the first row of $T$ contain the positions of the $U$'s in the Motzkin word $\mw$ and
let the second row of $T$ contain the positions of the $D$'s in $\mw$.
Fill the $n-1$ remaining cells in the first column with the positions of the $H$'s in $\mw$. 
For example, the standard Young tableau for the word $\mw = UUDHUDHUDDH$ is illustrated in Figure~\ref{sytab}.

\begin{figure}
\begin{center}
\begin{tikzpicture}
\def\step{0.43}
\begin{scope}[xshift=0cm, yshift=0cm]
\draw (0,0) node [anchor = north west]  {\ytableausetup{centertableaux}\begin{ytableau} 1 & 2 & 5 & 8 \\ 3 & 6 & 9 & 10 \\ 4 \\ 7 \\ 11 \end{ytableau}};
\draw[left] (-0.0,-1.5) node {$T = $};
\end{scope}
\end{tikzpicture}
\end{center}
\caption{The standard Young tableau corresponding to the word $\mw = UUDHUDHUDDH$ in the proof of Theorem~\ref{thmtwo}.\label{sytab}}
\end{figure}

Every $DH$-Motzkin word in $\Motzkin{m}{n-1}$ can be represented as a standard Young tableau of shape $(m,m,\stackrel{n-1}{\overbrace{1,1,\ldots,1}})$, and vice-versa. 
The condition that the first occurrence of $H$ appears after the first occurrence of $D$ ensures that the values are increasing down the first column.
The number of standard Young tableaux having shape $(m,m,\stackrel{n-1}{\overbrace{1,1,\ldots,1}})$ is given by the hook length formula. 
The hook lengths of the cells in the first row of $F$ (from left to right) are $n+m$, $m$, $m-1$, $\ldots$, 2.
The hook lengths of the cells in the second row (from left to right) are $n+m-1$, $m-1$, $m-2$, $\ldots$, 1.
The hook lengths for the remaining cells in the first column (from top to bottom) are $n-1,n-2,\ldots,1$. 
Since the total number of cells is $2m+n-1$, the number of such tableaux is 
$$\dfrac{(2m+n-1)!}{(n+m)(n+m-1)m!(m-1)!(n-1)!},$$
which can be written $\frac{1}{m} {2m+n-1 \choose m-1} {m+n-2 \choose m-1}.$
\end{proof}

\subsection{A Pr\"ufer code decomposition for spanning trees of the complete split graph}

It is a well established fact the the number of recurrent states of the Abelian sandpile model on a graph is equal 
to the number of spanning trees of that graph.
In this section we will enumerate the set of recurrent states by enumerating the spanning trees of the complete split graph.
We will do this by presenting a bijective proof that uses the Pr\"ufer code of the spanning trees.

In this subsection we will use a different labelling of the vertices of the complete split graph.
Let $\Split_{m,n}$ have clique vertices $\{v_1,v_2,\ldots,v_{m}\}$ and independent vertices $\{v_{m+1},\ldots,v_{m+n}\}$. 
Suppose these vertex labels are totally ordered:
$$v_1<v_2<\cdots < v_{m} < v_{m+1}< \cdots < v_{m+n}.$$
Any spanning tree $T$ of $\Split_{m,n}$ is a tree with vertices labelled $v_1,\ldots,v_{m+n}$ such that
for any edge $(v_i,v_j)$ of $T$, one has $i\leq m$ or $j \leq m$.

The Pr\"ufer code of this tree is obtained by successively deleting the leaves of $T$ with minimal label and recording the vertex to which they were attached, until $T$ has only one edge.
The unique edge that is obtained at the end of the procedure is $(v_{m+n},v_i)$ where $i\leq m$. 
Two words $\wf,\wg$ may be built using the alphabet consisting of the labels of the vertices.
Initially both $\wf$ and $\wg$ are empty, 
and at each step of the procedure a letter is added either to $\wf$ or to $\wg$.
It is added to $\wf$ if the leaf deleted is a clique vertex and added to $\wg$ if it is an independent vertex. 
The letter added is the label of the vertex neighbour of the deleted leaf.

\begin{figure}
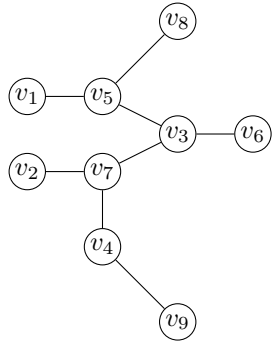

\begin{center}
\spanningtree
\end{center}
\caption{A spanning tree of the complete split graph $\Split_{5,4}$.\label{stexample}}
\end{figure}

For the tree given in Figure~\ref{stexample}, the vertices are deleted in the following order
$$v_1,v_2,v_6,v_8,v_5,v_3,v_7.$$
The Pr\"ufer code of the tree is $v_5,v_7,v_3,v_5,v_3,v_7,v_4$ and the construction 
of the two words $\wf$ and $\wg$ gives
$$\wf=v_5,v_7,v_3,v_7 \qquad \qquad \wg=v_3,v_5,v_4.$$

\begin{prop}\label{threeeight}
There is a bijection between spanning trees of $\Split_{m,n}$ and pairs of words $(\wf,\wg)$ such that
$\wf$ has length $m-1$ and has letters in the alphabet $\{v_1,\ldots,v_{m+n}\}$, while $\wg$ has $n-1$ letters 
in the alphabet $\{v_1,\ldots,v_m\}$.
\end{prop}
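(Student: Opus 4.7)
The plan is to establish the bijection by extending the classical Pr\"ufer decoding, with extra bookkeeping to separate the two subwords. The construction in the preamble already defines a map from spanning trees to pairs $(\wf,\wg)$; I would first verify this map lands in the claimed set, and then describe and analyse an explicit inverse.

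For the forward direction, the crucial observation is that the label $v_{m+n}$ is maximal, so in the leaf-stripping process it is never selected as the smallest leaf, and therefore persists until only one edge remains. Since $v_{m+n}$ is an independent vertex of $\Split_{m,n}$, all of its neighbours are clique vertices, so the surviving edge must be of the form $(v_{m+n},v_i)$ with $i\le m$. Thus exactly two vertices---one clique and one independent---are never removed, giving $|\wf|=m-1$ and $|\wg|=n-1$. The alphabet restriction on $\wg$ is immediate: each letter of $\wg$ is the unique neighbour in the current subtree of an independent leaf, and all neighbours of an independent vertex in $\Split_{m,n}$ lie in the clique part $\{v_1,\ldots,v_m\}$.

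For the inverse, I would maintain a list $L$ of surviving vertices together with pointers into $\wf$ and $\wg$. At each step, I select the smallest label $v\in L$ that does not appear in the unread portions of either word. If $v$ is a clique vertex, I join it to the current letter of $\wf$ and advance that pointer; otherwise I join it to the current letter of $\wg$ and advance that pointer. I then delete $v$ from $L$ and repeat until both words are exhausted, at which point I join the two remaining vertices of $L$ by the final edge.

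The main obstacle---and the point at which care is required---is checking that this decoding is always well defined and internally consistent: that at each step some eligible smallest $v$ exists, that the pointer being advanced does not overshoot, that the two final vertices in $L$ are $v_{m+n}$ together with some clique vertex so that the closing edge actually lies in $\Split_{m,n}$, and that encoding the resulting tree reproduces $(\wf,\wg)$. I expect to handle these points by the standard counting argument underlying the Pr\"ufer bijection: the multiplicity of a label $u$ across the concatenated code equals the desired degree of $u$ minus one, which pins down the sequence of leaves and the rate at which each of the two subwords is consumed. The split structure is preserved automatically because $\wg$ only ever supplies clique endpoints, so no independent-independent edge is ever introduced.
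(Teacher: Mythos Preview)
Your proposal is correct and follows essentially the same route as the paper: the paper also argues that $v_{m+n}$ survives so the lengths and alphabets are as claimed, and gives the same pointer-based inverse (pick the smallest current leaf not occurring in the unread suffixes, attach it to the head of $\wf$ or $\wg$ according to its type, advance, repeat). If anything, you are more explicit than the paper about the consistency checks; the invariant $(\text{clique vertices in }L)=(\text{unread letters of }\wf)+1$ and its analogue for $\wg$ are exactly what makes those checks go through.
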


\begin{proof}
Since at the end of the coding there remains an edge connecting $v_{m+n}$ and $v_i$ where $i\leq m$
there were $m-1$ clique vertices deleted to give the word $\wf$ and $n-1$ independent vertices
deleted to give the word $\wg$.
The neighbours of the independent vertices are $v_1,v_2,\ldots,v_m$ while the neighbours of the clique vertices
are $v_1,\ldots,v_{m+n}$.

In order to rebuild the tree one has to notice that the leaves of the coded tree are those vertices not appearing in $\wf$ nor in $\wg$.
The edges of the tree are obtained iteratively by determining the smallest leaf $v_i$, and then adding an edge from it to the first letter
of $\wf$ if $i\leq m$ or to the first letter of $\wg$ if $i>m$. 
Then proceed by iteratively deleting the first letter of $\wf$ or $\wg$, accordingly. 
In this iteration a vertex becomes a leaf if it does not appear in the remaining letters of the words $\wf$ and $\wg$.
\end{proof}

\begin{corollary}
The number of spanning trees of $\Split_{m,n}$ is $(m+n)^{m-1} m^{n-1}$ and, consequently, this is the number of recurrent configurations of the Abelian sandpile model on the complete split graph $\Split_{m,n}$.
\end{corollary}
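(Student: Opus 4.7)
The plan is to apply Proposition~\ref{threeeight} directly and then invoke the classical equivalence between spanning trees and recurrent sandpile configurations. Since Proposition~\ref{threeeight} establishes a bijection between spanning trees of $\Split_{m,n}$ and pairs $(\wf,\wg)$ of words, the entire counting argument reduces to enumerating such pairs.

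First I would observe that $\wf$ is an arbitrary word of length $m-1$ in the alphabet $\{v_1,\ldots,v_{m+n}\}$, which has size $m+n$, so there are $(m+n)^{m-1}$ choices for $\wf$. Independently, $\wg$ is an arbitrary word of length $n-1$ in the alphabet $\{v_1,\ldots,v_m\}$, which has size $m$, giving $m^{n-1}$ choices for $\wg$. Multiplying, the number of pairs is $(m+n)^{m-1} m^{n-1}$, and the bijection of Proposition~\ref{threeeight} transfers this count to spanning trees of $\Split_{m,n}$.

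For the second assertion, I would cite the well-known theorem that for any connected graph $G$ with a chosen sink $s$, the number of recurrent configurations of $\Sandpile(G,s)$ equals the number of spanning trees of $G$ (this follows, for example, from the Matrix-Tree Theorem together with the identification of the sandpile group order with the tree count). Since this count is independent of the choice of sink, the formula $(m+n)^{m-1} m^{n-1}$ simultaneously enumerates $\Rec(\Sandpile(\Split_{m,n},v_m))$ and $\Rec(\Sandpile(\Split_{m,n},w_n))$, recovering in particular the two formulae computed bijectively in the previous subsections (after accounting for the removal of the weakly-decreasing restriction via permutation orbits).

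There is essentially no obstacle here: the combinatorial work has been done in Proposition~\ref{threeeight}, and the corollary is the immediate numerical consequence. The only thing to be mildly careful about is to phrase the final sentence so that the reader understands why the equality with recurrent configurations is invoked rather than re-proved — it is a standard consequence of the Matrix-Tree Theorem applied to the reduced Laplacian of $\Split_{m,n}$, and does not depend on which vertex is chosen as the sink.
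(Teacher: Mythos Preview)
Your proposal is correct and follows exactly the same approach as the paper: invoke the bijection of Proposition~\ref{threeeight}, count the pairs $(\wf,\wg)$ as $(m+n)^{m-1}\cdot m^{n-1}$, and appeal to the standard spanning-tree/recurrent-configuration equality. In fact you have the two factors attached to the right words, whereas the paper's printed proof swaps them in the explanatory clause (a harmless typo since the product is the same).
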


\begin{proof}
By Proposition~\ref{threeeight}, this is the number of pairs of words $(\wf,\wg)$ such that 
$\wf$ is a word of length $m-1$ having letters in the alphabet $\{v_1,\ldots,v_{m+n}\}$, of which there are $m^{n-1}$,
and $\wg$ is the number of words of length $n-1$ having letters in the alphabet $\{v_1,\ldots,v_{m}\}$, of which there are $(m+n)^{m-1}$.
\end{proof}

\subsection{Combinatorial necklaces}
We may offer another characterisation of the decreasing recurrent states of the Abelian sandpile model by using the correspondence with Motzkin words that we have established. A combinatorial necklace is an arrangement of coloured beads in a circle. Two necklaces are called {\it{equivalent}} if one is a rotation of the other and otherwise they are {\it{different}}. 
The colours of a combinatorial necklace are from some alphabet, typically the first $k$ integers.

\newcommand{\NecklacesOne}{\mathsf{Necklaces}_1}
Let us define $\NecklacesOne(a,b,c)$ to be the set of all different 3-coloured combinatorial necklaces on the alphabet $\{U,D,H\}$ that contain 
$a$ beads of `colour' $U$, $b$ beads of colour $D$, and $c$ beads of colour $H$.
Then the set $\NecklacesOne(m,m-1,n)$ is in one-to-one correspondence with the set of weakly decreasing $\Rec(\Sandpile(\Split_{m,n};v_m))$ via the following association:
if $\mw$ is the Motzkin word that corresponds to the weakly decreasing $c \in \Rec(\Sandpile(\Split_{m,n};v_m))$, then
the necklace that corresponds to $c$ is the one whose clockwise reading is $U\mw$, the Motzkin word $\mw$ with $U$ prepended.

The class of combinatorial necklaces  $\NecklacesOne(m,m-1,n)$ is equivalent to the set of Motzkin words $\Motzkin{m-1}{n}$ so it is possible to present and prove the characterisation of the recurrent states using necklaces instead. 

Notice that, by virtue of this construction, a necklace in $\NecklacesOne(m,m-1,n)$ cannot have two different representations.
The unique representation is achieved by considering the sub-necklace consisting of $U$'s and $D$'s. As there will be one more $U$ than $D$, determine the position of $U$ such that the $U$'s and $D$'s that follow it are a Dyck word. 
Delete this $U$ and read the remainder of the necklace in a clockwise direction. 
Let $\mw$ be the sequence that results. This $\mw$ will be a Motzkin word. 
Reversing this procedure is straightforward as we simply prepend a $U$ to the word and write the symbols as they appear (clockwise) in a circle.
The combinatorial necklace that corresponds to the Motzkin word $\mw= HUHHUDHUDD \in \Motzkin{3}{4}$ is illustrated in Figure~\ref{cnexample}.
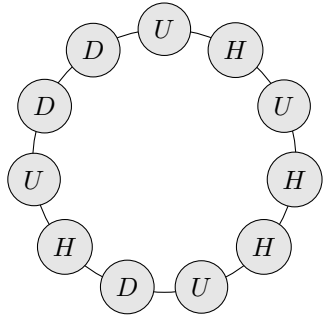
\begin{figure}
\begin{center}
\begin{tikzpicture}
\def\nnumber{11}
\def\radius{1.75}
\draw (0,0) circle (\radius);
\foreach \r/\l in {0/U,1/H,2/U,3/H,4/H,5/U,6/D,7/H,8/U,9/D,10/D}
	\node [circle,draw,fill=ww] () at ({90-(360/\nnumber)*(\r)}:\radius) {$\l$};
\end{tikzpicture}
\end{center}
\caption{The combinatorial necklace in $\NecklacesOne(4,3,4)$ that corresponds to $\mw= HUHHUDHUDD$. The top $U$ is the `special' $U$ signifying the start of the associated word.\label{cnexample}}
\end{figure}

We can do the same for those $DH$-Motzkin words in $\Motzkin{m}{n-1}$  that correspond to weakly decreasing configurations in $\Rec(\Sandpile(\Split_{m,n};w_n))$. Use the same procedure of prepending a $U$ to the $DH$-Motzkin word so as to be able to identify a starting point of the necklace. The resulting collection of necklaces is a strict subset of $\NecklacesOne(m+1,m,n-1)$.

\section{Tiered parking functions}

In light of the classification of weakly decreasing recurrent states as Motzkin words/combinatorial necklaces, we may also offer the following equivalent definition 
of such configurations as a new type of parking function that we will call a {\em tiered parking function}. 
Parking functions were mentioned earlier in the paper in relation to the recurrent states of the sandpile model on the complete graph.
Moreover, the $G$-parking functions of Postnikov and Shapiro~\cite{postshap} provide a useful language in which an alternative description of recurrent states of the sandpile model on a general graph may be given.

The application of $G$-parking functions to the complete split graph is different to what we present in this section. 
Our aim is to provide a new `type' of parking function, and provide a setting in which recurrence is quite easily established in this new context.
We refer the interested reader to Yan~\cite{yan} for a discussion of $G$-parking functions and their relation to the ASM.
Our definition is inspired by Cori and Poulalhon's~\cite{coripoul} concept of $(p,q)$-parking functions.

\begin{definition}[$k$-tiered parking function]
\label{tpf}
Let $m_1,\ldots,m_k$ be a sequence of positive integers with $m_1+\ldots+m_k=M$. 
Suppose that there are $m_i$ cars of colour/tier $i$ and there are $M$ parking spaces.
We will call a sequence $P=(m_1;P_2,\ldots,P_k)$ of sequences $P_i = (p^{(i)}_1,\ldots,p^{(i)}_{m_i})$ a $k$-tiered parking function of order $(m_1,\ldots,m_k)$ if 
there exists a parking configuration of the $M$ cars that satisfies the following preferences for all drivers:
\begin{center}
\begin{tabular}{l}the driver of the $j$th car having colour $i>1$ asks that there\\ be {\red{at least}} $p^{(i)}_j$ cars of colours $\{1,\ldots ,i-1\}$ parked before him.
\end{tabular}
\end{center}
Drivers of cars having colour 1 have no preferences with regard to other coloured cars, which is why we only list their number $m_1$ in $P$.
\end{definition}

\begin{example}
The sequence $P=(4;(2,1,0,4,2),(8,2,1,2),(4,10,8))$ is a 4-tiered parking function of order $(4,5,4,3)$. 
This is realised by the following parking configuration where the leftmost entry represents the first parking spot, and the colour of the parked car is indicated.
\begin{center}
$\longrightarrow$ direction of traffic $\longrightarrow$ \ \\[0.5em]
%\texttt{ 2 3 1 3 4 3 2 1 2 4 2 1 4 1 3 2 }
\car{2}\car{3}\car{1}\car{3}\car{4}\car{3}\car{2}\car{1}\car{2}\car{4}\car{2}\car{1}\car{4}\car{1}\car{3}\car{2} 
\end{center}
\end{example}

\begin{example}
%red=3, blue=2, empty=1
The sequence $P=(9;(8,2,9,4,6,9,7,8,2,7,9,4),(18,2,14,6,21,13,7,13,3))$ is a 3-tiered parking function of order $(9,12,9)$.
This is realised by the following parking configuration:
%Example~~\ref{firstbigone}:
%%$$\fcn_{10,12}(c) = 022201221020021211202011202011.$$
\begin{center}
$\longrightarrow$ direction of traffic $\longrightarrow$ \ \\[0.5em]
%\texttt{ R B B B R e B B e R B R R B e B e e B R B R e e B R B R e e} 
%\texttt{ 1 2 3 2 3 2 1 2 3 2 3 2 1 1 2 1 2 3 3 2 3 1 2 2 1 3 2 2 2 3 }
\car{1}\car{2}\car{3}\car{2}\car{3}\car{2}\car{1}\car{2}\car{3}\car{2}\car{3}\car{2}\car{1}\car{1}\car{2}\car{1}\car{2}\car{3}\car{3}\car{2}\car{3}\car{1}\car{2}\car{2}\car{1}\car{3}\car{2}\car{2}\car{2}\car{3}
\end{center}
\end{example}

We now connect these tired parking functions to the recurrent states of the sandpile model for the complete split graph.

\begin{theorem}
\begin{enumerate}
\item[]
\item[(a)] (Clique-sink case)
A configuration $c=(a_1,\ldots,a_{m-1},-;b_1,\ldots,b_n)$ on $\Sandpile(\Split_{m,n},v_m)$ is recurrent iff there exists a 3-tiered parking function of order $(m-1,n,m-1)$ with
$$P=((b_1,\ldots,b_n),(a_1+1,\ldots,a_{m-1}+1)).$$
\item[(b)] (Independent-sink case)
A configuration $c=(a_1,\ldots,a_{m};b_1,\ldots,b_{n-1},-)$ on $\Sandpile(\Split_{m,n},w_n)$  is recurrent 
iff there exists a 3-tiered parking function of order $(m,n-1,m)$ with
$$P=((b_1,\ldots,b_{n-1}),(a_1+1,\ldots,a_{m}+1)),$$
with the added restriction that there is no colour 2 car that is parked after all of the $n-1$ colour 1 cars.
\end{enumerate}
\end{theorem}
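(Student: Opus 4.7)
The plan is to deduce this theorem directly from the Motzkin-word characterisations of Theorems~\ref{thmone} and~\ref{thmtwo}. The key translation identifies the letter $D$ with a tier-1 car, $H$ with a tier-2 car, and $U$ with a tier-3 car, and interprets a (DH-)Motzkin word, read from right to left, as the spatial sequence of parked cars along the one-way street. By the defining formulae for $\mtc$ (and its analogue $g$), the number $b_j$ equals the count of $D$'s to the right of the $j$-th $H$ of $\mw$, and $a_j+1$ equals the count of $D$'s and $H$'s to the right of the $j$-th $U$; after reversal, these are precisely the numbers of strictly-lower-tier cars parked before the corresponding $j$-th $H$ and $j$-th $U$.

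For the forward direction of part~(a), given a recurrent $c$, Theorem~\ref{thmone} yields a unique $\mw \in \Motzkin{m-1}{n}$ with $c = \mtc(\mw)$. Reading $\mw$ right-to-left gives a parking configuration with tier totals $(m-1,n,m-1)$. Assigning the $j$-th tier-2 car (preference $b_j$) to the $j$-th $H$ of $\mw$ and the $j$-th tier-3 car (preference $a_j+1$) to the $j$-th $U$ of $\mw$ realises the tiered parking function $P$ with every preference met with equality. Part~(b) is entirely analogous via Theorem~\ref{thmtwo}; the additional restriction --- no tier-2 car parked after all tier-1 cars --- is the image under reversal of the $DH$-condition (the first $H$ of $\mw$ lies to the right of the first $D$), which forces at least one tier-1 car to appear to the right of every tier-2 car in the parking configuration.

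For the converse, suppose $P$ is a valid tiered parking function with the specified preferences. I would use the weakly decreasing order of $(b_j)$ and $(a_j+1)$ to canonicalise a realisation --- greedily place the tier-3 car of largest preference as far to the right as possible, then the next, and so on, doing the same for tier-2, and finally filling tier-1 cars in the remaining slots --- and then read off the sequence of tiers from right to left to obtain a word $\beta$ with $(m-1)$ copies each of $U$ and $D$ and $n$ copies of $H$. A direct check against the construction of $\beta$ in the proof of Theorem~\ref{thmone} (respectively Theorem~\ref{thmtwo}) shows that $\mtc(\beta)=c$. The existence of a valid parking configuration, applied prefix by prefix to $\beta$, forces the inequality $\#U\geq\#D$ on every prefix (with the extra leading-$D$ condition in the independent-sink case), so $\beta$ is a Motzkin word (respectively $DH$-Motzkin word), and $c$ is recurrent.

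The main obstacle I anticipate is this converse direction: converting the existential combinatorial statement ``there is some parking realisation'' into the deterministic prefix inequalities that characterise Motzkin paths. Exhibiting the right canonical realisation and establishing by induction that its prefix statistics respect the Motzkin inequality is the technical heart of the argument; the monotonicity of the preference sequences is essential, as it is precisely what allows the greedy placement to succeed in the recurrent cases.
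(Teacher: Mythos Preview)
Your approach coincides with the paper's: both deduce the theorem from Theorems~\ref{thmone} and~\ref{thmtwo} via the reversal of the Motzkin word together with the identification $D\leftrightarrow$ tier~1, $H\leftrightarrow$ tier~2, $U\leftrightarrow$ tier~3. The paper's proof is in fact only two sentences stating exactly this correspondence, so your forward direction and your reading of the $DH$-condition for part~(b) match it precisely; your proposed treatment of the converse (canonicalising a realisation and verifying the Motzkin prefix inequality) goes beyond what the paper actually writes down.
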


\begin{proof}
The equivalence is easily seen by associating a valid parking configuration with a reversal of the Motzkin word $\mw$ associated to a weakly decreasing recurrent configuration. 
Cars of colour $1$ correspond to $D$'s in a Motzkin word, while colour 2 corresponds to $H$'s and colour 3 corresponds to $U$'s.
\end{proof}

\begin{remark}
A slightly different formulation can be given in Definition~\ref{tpf} whereby cars of colour 1 are treated as empty parking spaces, and the number of empty places along with the number of cars of colour `less than' another colour be considered. 
\end{remark}

\begin{question}
How many 2-tiered parking functions are there? How many 3-tiered parking functions of order $(a,b,c)$ are there?
\end{question}

\section*{Acknowledgments}
My gratitude to the anonymous referees for their insightful suggestions on how to improve the presentation.

\end{document}